\DeclareMathOperator*{\argmax}{arg\,max}
\DeclareMathOperator*{\argmin}{arg\,min}
\newtheorem{theorem}{Theorem}[section]
\newtheorem{proposition}[theorem]{Proposition}
\numberwithin{equation}{section}
\renewcommand{\u}{{\textbf{u}}}
\newcommand{\be}{\begin{equation}}
\newcommand{\ee}{\end{equation}}
\newcommand{\ba}{\begin{array}}
	\newcommand{\ea}{\end{array}}
\numberwithin{equation}{section}
\title[]{A primal-dual approach for solving conservation Laws with Implicit in Time Approximations}
\author[Liu]{Siting Liu}
\thanks{Department of Mathematics, University of California, Los Angeles, CA 90095, USA. (siting6@math.ucla.edu, sjo@math.ucla.edu)}
\author[Osher]{Stanley Osher}
\author[Li]{Wuchen Li}
\thanks{Department of Mathematics, University of South Carolina, Columbia, SC 29208, USA. (wuchen@mailbox.sc.edu)}
\author[Shu]{Chi-Wang Shu}
\thanks{Division of Applied Mathematics, Brown University, Providence,
RI 02912, USA. (chi-wang\_shu@brown.edu)}
\thanks{S. Liu and S. Osher thank the funding from AFOSR MURI FA9550-18-1-0502 and ONR: N00014-20-1-2093 and N00014-20-1-2787. W. Li thanks the funding from NSG RTG2038080 and AFOSR FA9550-18-1-0502. C. Shu thanks the funding from AFOSR FA9550-20-1-0055 and NSF DMS-2010107.}
\begin{document} 
    \keywords{Conservation laws; Implicit schemes; Discontinuous Galerkin (DG) method; Optimization; Primal-Dual hybrid gradient methods.}
	\maketitle

	\begin{abstract}
    In this work, we propose a novel framework for the numerical solution of time-dependent conservation laws with implicit schemes via primal-dual hybrid gradient methods. 
    We solve an initial value problem (IVP) for the partial differential equation (PDE) by casting it as a saddle point of a min-max problem and using iterative optimization methods to find the saddle point.
    Our approach is flexible with the choice of both time and spatial discretization schemes.
    It benefits from the implicit structure and gains large regions of
    stability, and overcomes the restriction on the
mesh size in time by explicit schemes from Courant--Friedrichs--Lewy (CFL) conditions (really via von Neumann stability analysis).
    Nevertheless, it is highly parallelizable and easy-to-implement. 
    In particular, no nonlinear inversions are required!
    Specifically, we illustrate our approach using the finite difference scheme and discontinuous Galerkin method for the spatial scheme; backward Euler and backward differentiation formulas for implicit discretization in time. 
    Numerical experiments illustrate the effectiveness and robustness of the approach.
    In future work, we will demonstrate that our idea of replacing an initial-value evolution equation with this primal-dual hybrid gradient approach has great advantages in many other situations.
	\end{abstract}

\section{Introduction}	

High order numerical approximations of hyperbolic systems of conservation laws and their viscous regularizations have been well studied for decades. 
High order accurate schemes in the spatial domain, such as discontinuous Galerkin methods (DG methods)\cite{cockburn2001runge}, essentially non-oscillatory (ENO)\cite{harten1987uniformly} methods, and weighted essentially non-oscillatory (WENO) schemes \cite{liu1994weighted}, have been quite successful in computational fluid dynamics, magnetohydrodynamics and numerous other fields.
In particular, the successful methods suppress spurious oscillations, which tend to occur when the solution develops discontinuities. 
As for high order schemes in the time domain, explicit methods such as the forward Euler method and Runge-Kutta method have 
restrictions on the length of the time step from Courant-Friedrichs-Lewy (CFL) conditions (von Neumann stability analysis).
While explicit methods sometimes require impractically small time steps for many stiff problems, implicit methods are used with larger time steps.
However, implicit methods typically require extra computations and can be hard to implement.
Newton-type solvers take ample storage of the Jacobian matrix and need a good initial guess for fast convergence.

In this paper, we propose a novel computational framework for solving implicit numerical PDEs. It is easy to implement and has the flexibility to accommodate different high order numerical schemes. We design a primal-dual approach to solve the implicit schemes of conservation laws efficiently.
The key technique here is introducing a Lagrange multiplier, transforming the initial value problem into a min-max problem, and applying the first-order optimization method to find the saddle point.
We use the primal-dual hybrid gradient descent method (PDHG)~\cite{champock11,champock16} with a particular choice of preconditioner to solve the min-max problem, of which the saddle point corresponds to the solution of the initial value problem.
The saddle point structure leads to a forward-backward coupled system of equations, where the original conservation law equation (the primal equation) runs forward in time, and the dual equation (the Lagrange multiplier) solves an equation with a specified terminal time condition.
As for numerical approximations, we discretize the conservation law with the implicit scheme (such as the backward Euler method). Then with a summation by parts, we get a forward Euler scheme for the dual equation.
This equation also inherits an implicit scheme since its terminal time condition is given.
We alternatively update the solutions for the primal and dual equations in the spatial-time domain in parallel by applying the proximal gradient descent (ascent) method.
Moreover, our primal-dual framework can be generalized to various high order schemes in time. This paper discusses the finite difference scheme and discontinuous Galerkin method.

In the literature, optimization has a close connection with numerical PDEs and optimal control problems. 
The celebrated Benamou-Brenier formulation\cite{benamou2000computational} provides a computational fluid mechanics perspective of the optimal transport. It casts the problem as a convex minimization problem and solves it numerically via an augmented Lagrangian method. 
The paper \cite{carrillo2022primal} proposes a new computational method by leveraging the underlying variational structure of the PDEs. Their methods first discretize the problem in time use the  Jordan-- Kinderlehrer--Otto (JKO) type scheme and compute discrete time equations using Benamou-Brenier formula. Recent work \cite{cheng2022new1} introduces a space-time Lagrange multiplier to enforce the positivity of the solution and construct numerical schemes via predictor-corrector approach. In\cite{li2021controlling,li2022controlling}, a novel control problem is proposed on a modified optimal transport space based on conservation laws with diffusion regularization. The computations of such control problems utilize the forward-backward coupled structure of the continuity equations and the value functions. The paper also discusses calculation of a degenerate case, where no control is enforced, and gives a solution to an initial value problem. In  
\cite{LI2022111409}, a control problem associated with nonlinear reaction diffusion equations is studied and primal-dual methods 
are used for the computation.
In \cite{zang2020weak}, they propose a weak adversarial networks approach, in which they study a saddle point problem with nonlinear dependence on the dual (test) variable. They parameterize both the solution and its dual (test) functions with deep neural networks for solving PDEs.
Compared with the works above, we study a simple inf-sup saddle point problem which depends linearly on the dual variable. The saddle point solves conservation laws directly.
The PDHG algorithm provides a provable contraction for solving the proposed saddle point problem when the IVP is a linear PDE.
The parallelizable property (in space-time) of our method is based on the iterative proximal updating of the primal and dual variables. This is different from the parareal (parallel-in-time) method \cite{gander2007analysis}, which focuses on multiple shooting along the time axis. In addition, the PDHG methods provide a simple to implement algorithm,  compared with the Newton's method used in parareal method. Our method does not compute the inversion of Jacobian matrix for the implicit schemes.

The rest of the paper is organized as follows. Section \ref{sec:continuous} describes how the primal-dual approach solves an initial value problem at the continuous level. Next, we discuss in Section~\ref{sec:numerics} the implicit numerical scheme and 
the optimization method we use to solve the min-max problem. In particular, we point out that our approach works as an iterative solver in general for solving one-timestep forward of an initial value problem. We present several numerical examples in Section~\ref{sec:example} to validate and demonstrate the effectiveness of our framework. We give some concluding remarks in Section~\ref{sec:summary}. 

\section{A inf-sup problem formulation}\label{sec:continuous}
In this section, we derive the inf-sup formulation of PDEs, where the saddle point corresponds to the solution of the initial value problem. We review the primal-dual hybrid gradient descent method and its variations and discuss the application of the algorithm in our setup.
\subsection{From the initial value problem to a saddle point formulation}
We consider the following initial value problem of scalar conservation law defined over the domain $\Omega \times[0,T]$:
\begin{equation}
\label{eq:conservation_law_gamma}
\begin{split}
&        \partial_t u(x,t) + \partial_x f(u(x,t)) - \partial_{x} \left(\gamma(x) \partial_x u\left(x,t\right) \right) = 0,\\
& u(x,0) = u_0(x).
\end{split}
\end{equation}
For simplicity, we assume $\Omega \subset \mathbb{R}$ satisfies periodic boundary conditions. Here $f(u)$ is the flux term, and $\gamma = 0$ for the nonviscous case and $\gamma(x)>0$ for viscous conservation laws.
We seek for a function $u: \Omega \times[0,T] \rightarrow \mathbb{R}$ that satisfies Equation \eqref{eq:conservation_law_gamma}. It is straightforward to come up with the following minimization problem: 

\begin{align}
     \label{eq:min_dirac_delta}
      &\min_{u} \mathbf{1}_{u\in \mathcal{U}}.
      \end{align}
      \begin{align*}
      &\quad \mathcal{U}=\{u:\partial_t u(x,t) +\partial_x f(u(x,t)) - \partial_{x} \left(\gamma(x) \partial_x u\left(x,t\right) \right)  = 0 \;\text{for all}\; (x,t) \in \Omega \times[0,T],\\ & u(x,0) = u_0(x)\},          
      \end{align*}

where the minimizer is the solution to the initial value problem.
Here, we have the indicator function defined as follows 
\begin{align*}
    \mathbf{1}_{a\in \mathcal{A}} = \begin{cases}
    0 \quad \text{if} \; a \in \mathcal{A} \\
    \infty \quad  \text{if} \; a \notin \mathcal{A}. 
    \end{cases}
\end{align*}
The optimization problem \eqref{eq:min_dirac_delta} can also be rewritten as follows:
\begin{align}
\label{eqn:min_constraint}
    \min_{u\in \mathcal{U}} \mathbf{0},
\end{align}
where the objective function is a constant, while the continuity equation lies in the constraint. Solving the constrained optimization problem in \eqref{eqn:min_constraint} is equivalent to find the minimum in problem \eqref{eq:min_dirac_delta}.

The Lagrange multiplier technique is a popular approach to tackle constrained optimization problems. 
By introducing a Lagrange multiplier $\phi: \Omega \times[0,T] \rightarrow \mathbb{R}$, we can remove the constraint from problem \eqref{eqn:min_constraint}, which leads to a min-max problem:
\begin{equation}\label{eqn:min_max}
    \min_{u,u(\cdot,0) = u_0} \max_{\phi} \mathcal{L}(u,\phi),
\end{equation}
where
\begin{align*}
  \mathcal{L}(u,\phi) & = \int_{0}^T \int_{\Omega} \phi(x,t) \left( \partial_t u(x,t) + \partial_x f(u(x,t)) - \partial_{x} \left(\gamma(x) \partial_x u\left(x,t\right) \right) \right)dx dt \\
   & = -\int_{0}^T \int_{\Omega} u(x,t) \left( \partial_t \phi(x,t)  + \partial_{x} \left(\gamma(x) \partial_x \phi\left(x,t\right) \right) \right) + \partial_x \phi(x,t) f(u(x,t)) dx dt \\
   &\quad + \int_{\Omega} u(x,T)\phi(x,T) - u(x,0) \phi(x,0)dx.
\end{align*}
In the above formula, we apply integration by parts in spatial and time domain respectively. 
By taking the first-order optimality condition, we obtain the following system of equations
\begin{align}
    \begin{cases}
     \partial_t u(x,t) + \partial_x f(u(x,t)) - \partial_{x} \left(\gamma(x) \partial_x u\left(x,t\right) \right) = 0, \\
     u(x,0) = u_0(x),\\
     \partial_t \phi(x,t) + \partial_x \phi(x,t) f'(u(x,t)) + \partial_{x} \left(\gamma(x) \partial_x \phi\left(x,t\right) \right) = 0,\\
        \phi(x,T) = 0.
    \end{cases}
\end{align}

\noindent \textbf{For $f(u) = \alpha u$ for some $\alpha >0$}, the system of dual equations are two linear transport equations, $u \,(\phi)$ runs forward (backward) in time:
\begin{align}
\label{eqn:linear_sys}
    \begin{cases}
     &    u_t + \alpha u_x  -\partial_{x} \left(\gamma(x) \partial_x u\left(x,t\right) \right)= 0,\\
& u(x,0) = u_0(x),\\
& \phi_t + \alpha \phi_x  + \partial_{x} \left(\gamma(x) \partial_x \phi\left(x,t\right) \right)= 0,\\
& \phi(x,T) = 0.
    \end{cases}
\end{align}

\noindent \textbf{For $f(u) = \alpha u^2 $ for some $\alpha >0$}, the system of dual equations are  as follows, where $u $ solves a quadratic conservation law with initial condition specified, while $\phi$ satisfies a backward transport equation that couples $u$ in the transportation term:
\begin{align}
\label{eqn:quad_conv_sys}
    \begin{cases}
     &    u_t + \alpha \partial_x (u^2) - \partial_{x} \left(\gamma(x) \partial_x u\left(x,t\right) \right)= 0,\\
& u(x,0) = u_0(x),\\
& \phi_t + (2 \alpha u) \phi_x + \partial_{x} \left(\gamma(x) \partial_x \phi\left(x,t\right) \right)= 0,\\
& \phi(x,T) = 0.
    \end{cases}
\end{align}
\subsection{The primal-dual optimization method}
The primal-dual hybrid gradient (PDHG) method is a first-order method that solves constrained and non-differentiable optimization problems with a saddle point structure.
Given some Hilbert spaces $\mathcal{X,H}$,  convex function $f, g$,
and a linear map $A: \mathcal{X} \rightarrow \mathcal{H}$, denote $g^*(q) = \sup_{p} \langle p, q \rangle - g(p)$, the convex conjugate of function $g$. 
The saddle point problem takes the following form
\begin{equation*}
    \min_p \max_q \quad h(p) +  \langle Ap, q \rangle - g^*(q),
\end{equation*}
which is equivalent to the following minimization problem 
\begin{align*}
     \min_p  \quad h(p) +   g(Ap).
\end{align*}
The PDHG methods take proximal gradient descent (ascent) steps on variable $p \,( q )$ alternatively. Set $\tau_p,\tau_q >0$ as stepsizes, $(p^0,q^0)$ as the initial guess. The details of the iterations at $n$-step are as follows
\begin{align*}
\begin{cases}
    p^n & = \argmin \; h(p) +  \langle p, A^T \tilde{q}^{n-1} \rangle + \dfrac{1}{2 \tau_p} \| p-p^{n-1}\|^2_{L^2},\\
       q^n & = \argmax\;  \langle Ap^n, q \rangle + g(q) -\dfrac{1}{2 \tau_q} \| q-q^{n-1}\|^2_{L^2},\\
       \tilde{q}^{n} & = 2 q^n - q^{n-1}.
\end{cases}
\end{align*}
Here, $\|u\|^2_{L^2} =  \langle u,u \rangle _{L^2} = \int_0^T \int_{\Omega} u^2 dx dt$.
The algorithm is not sensitive to the initial guess and converges globally if the stepsize satisfies the following:
\begin{align*}
    \tau_p \tau_q < \dfrac{1}{\|A^T A\|}.
\end{align*}

Extensions and generalizations of PDHG methods have been carefully investigated.
In \cite{JacobsLegerLiOsher2018_solvinga}, the General-proximal Primal-Dual Hybrid Gradient (G-prox PDHG) is proposed with a focus on linear constraints induced by differential equations.
With proper choices of norms for the proximal steps, the optimization algorithm enjoys the property that the optimization stepsizes are free from the grid size.  In \cite{valkonen2014primal}, the operator $A$ is extended to non-linear. The method adopts the linear approximation of $A$ and guarantees the local convergence when some technical conditions are satisfied. 

We adapt ideas from the General-prox PDHG and discuss the linear transport equation and quadratic conservation laws about the choice of norms. The latter one also integrates the linearization method from \cite{valkonen2014primal}.

\noindent \textbf{Linear transport equation with $f(u) = \alpha u$.}
The min-max problem \eqref{eqn:min_max} has its saddle point satisfying Equation \eqref{eqn:linear_sys}. We omit the initial-terminal conditions and use the following notation:
\begin{align*}
   & p  = u,\quad q  = \phi,\\
 &    A(u) = \left(\partial_t + \alpha \partial_x\right)u -\partial_{x} \left(\gamma \partial_x u \right),\\
 &   A^T(\phi)  = -\left(\partial_t + \alpha \partial_x\right)\phi -\partial_{x} \left(\gamma \partial_x \phi \right),\\
 &   h = 0,\quad g(Ap) =\begin{cases}
        0,\quad \text{if} \;\, Ap = 0\\
        + \infty, \quad \text{else}.
    \end{cases} 
\end{align*}
The PDHG has a convergence rate rate $O(1/N)$ in finite dimensions, where the constant factor is proportional to $\frac{1}{\tau_p \tau_q }$.
In our case, $A$ contains differential operators. Hence it is easy to see that as we refine the mesh grid, smaller $\tau_p,\tau_q$ are needed, leading to a slower convergence.
We approximate the operator $ A^T A$ with
\begin{align}\label{eq:K_linear0}
    K = -\partial_{tt} - \alpha^2 \partial_{xx}  + \hat{\gamma}^2 \partial_{xxxx}.
\end{align}
 Note here $\hat{\gamma}$ is a constant approximation of the coefficient function $\gamma(x)$. Then we introduce the norm $\| \cdot \|_{\mathcal{H}}$:
\begin{align*}
    \| u \|^2_{\mathcal{H}} & = \|\partial_t u\|_{L^2}^2  +\alpha^2 \|\partial_x u\|_{L^2}^2   + \hat{\gamma}^2 \|\partial_{xx} u\|_{L^2}^2 \\
    & = \langle u,Ku \rangle_{L^2} + \int_{\Omega} u u_t dx|_{t = 0},
\end{align*}
where the last inequality is obtained via integration by parts with the periodic boundary condition in space.
We modify the algorithm accordingly for problem \eqref{eqn:min_max} with the norm $  \| \cdot \|^2_{\mathcal{H}}$ for the G-prox PDHG. The $n$-th iteration is taken as follows:
\begin{align}\label{alg:linear_itr}
 \begin{cases}
    u^n & = \argmin\;  \langle u, A^T \tilde{\phi}^{n-1} \rangle + \dfrac{1}{2 \tau_u} \| u-u^{n-1}\|^2_{L^2},\\
       \phi^n & = \argmax\;  \langle Au^n, \phi \rangle - \dfrac{1}{2 \tau_{\phi}} \| \phi-\phi^{n-1}\|^2_{\mathcal{H}},\\
       \tilde{\phi}^{n} & = 2 \phi^n - \phi^{n-1}.
\end{cases}
\end{align}
Each update can be written explicitly,
\begin{align*}
    u^n & =  u^{n-1} - \tau_u \left(\left(\partial_t + \alpha \partial_x  \right) \tilde{\phi}^{n-1}      +\partial_{x} \left(\gamma \partial_x \tilde{\phi}^{n-1} \right)  \right),\\
       \phi^n & = \phi^{n-1} - \tau_{\phi} \left(\partial_{tt} + \alpha^2 \partial_{xx} -  \hat{\gamma}^2 \partial_{xxxx} \right)^{-1} \left(\left(\partial_t + \alpha \partial_x  \right)u^n -\partial_x (\gamma \partial_{x}u^n)\right).\\
\end{align*}
The computation for $\phi^n$ can be done by using Fast Fourier Transform (FFT).

\noindent \textbf{Quadratic conservation laws with $f(u) = \alpha u^2$.} The problem \eqref{eqn:min_max} loses the convex-concave structure, yet we can still modify the G-prox PDHG algorithm to compute the local saddle point.
We omit the initial-terminal condition as well as the Laplacian term for simplicity. The nonlinear constraint from the conservation law takes the form:
\begin{align*}
&     A(u) = \partial_t u +  \partial_x f(u),\\
&     \langle A(u),\phi \rangle = \int_0^T \int_{\Omega} \partial_t u +  \partial_x f(u) \,dx dt,
\end{align*}
and we use the following linear approximations
\begin{align*}
   &  \langle A(u),\phi \rangle \approx \langle A(\hat{u}),\phi \rangle + \langle u-\hat{u} , \nabla A(\hat{u})^T \phi \rangle, \\
    &   \nabla A(u)^T \phi =  \partial_t \phi + f'(u) \partial_x \phi.
\end{align*}
The last line is obtained by taking the first variation of the  functional above.
As for the choice of the norm for the General-proximal PDHG, rather than using the approximation of nonlinear operator $A^TA$, we only take into consideration the linear differential operators  and denote
\begin{align*}
&\hat{A} u = (\partial_t + c \partial_x) u,
\end{align*}
where the constant $c$ is chosen based on estimations from $f'(\cdot)$. 
Now the operator $K$ that approximates $\hat{A}^T \hat{A} $ is as follows:
\begin{align}\label{eq:K_linear}
 K &= -\partial_{tt} -c^2 \partial_{xx}.
\end{align}
The norm is defined as 
\begin{align*}
  \|u\|^2_{\hat{\mathcal{H}}} &=\|\partial_t u\|_{L^2}^2  +c^2 \|\partial_x u\|_{L^2}^2   \\ & =\langle u, K u\rangle_{L^2}+ \int_{\Omega} u u_t dx|_{t = 0},.
\end{align*}
At $n$-th step, the algorithm takes the following updates:

\begin{align}\label{alg:nonlinear_jtr}
 \begin{cases}
    u^n & = \argmin \; \langle u, \nabla A({u}^{n-1})^T \tilde{\phi}^{n-1} \rangle + \dfrac{1}{2 \tau_u} \| u-u^{n-1}\|^2_{L^2},\\
       \phi^n & = \argmax \;  \langle Au^n, \phi \rangle - \dfrac{1}{2 \tau_{\phi}} \| \phi-\phi^{n-1}\|^2_{\hat{\mathcal{H}}},\\
       \tilde{\phi}^{n} & = 2 \phi^n - \phi^{n-1}.
\end{cases}
\end{align}
The explicit updates for $(u,\phi)$ are as follows:
\begin{align*}
      u^n & =  u^{n-1} - \tau_u \left( \partial_t + f'(u^{n-1}) \partial_x \right) \tilde{\phi}^{n-1},\\
       \phi^n & = \phi^{n-1} - \tau_{\phi} \left(\partial_{tt} + c^2 \partial_{xx} \right)^{-1} \left( \partial_t u^n + \partial_x f(u^n)\right).\\
\end{align*}
\subsection{On the convergence of the algorithm}
The PDHG algorithm's convergence is based on the convex-concave structure of the min-max problem \eqref{eqn:min_max}. In our setup, we only have global convergence when the conservation laws are linear (i.e., $f(\cdot)$ is linear, $A$ is a linear operator). Indeed, we obtain geometric convergence for this case. 

\begin{proposition}
For a linear operator $A: \mathbb{R}^N \rightarrow \mathbb{R}^N $, denote $u^*$ as the solution for $Au = c$.  Solve the min-max problem $\min_u \max_{v} \langle Au-c,v\rangle$ using Algorithm \ref{alg:linear_itr}. $K: \mathbb{R}^N \rightarrow \mathbb{R}^N$ is the linear operator that induced the norm $\|\cdot\|_{\mathcal{H}}: \|v\| = \langle v, Kv\rangle_{L^2}$ in the proximal update of $\phi$. If the following inequalities are satisfied: for some $k>0$,
$\sigma\tau < k$,  $\| A^TK^{-1}A\| = k$, we have $\lim_{n \rightarrow\infty} Au^n -Au^* = 0$. 
\end{proposition}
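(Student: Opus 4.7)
The plan is to follow a Chambolle--Pock-style Lyapunov argument adapted to the $K$-weighted inner product on the dual variable. First, note that any saddle point $(u^*,\phi^*)$ of $\min_u\max_\phi \langle Au-c,\phi\rangle$ must satisfy the first-order conditions $A^T\phi^*=0$ and $Au^*=c$; in particular $\phi^*=0$ is always admissible. Writing $\tau=\tau_u$ and $\sigma=\tau_\phi$, the argmin/argmax steps of Algorithm~\eqref{alg:linear_itr} (adapted to the extra $-c$ on the constraint side) unfold explicitly to
\[
u^n = u^{n-1}-\tau A^T\tilde\phi^{n-1},\qquad \phi^n=\phi^{n-1}+\sigma K^{-1}(Au^n-c),\qquad \tilde\phi^n = 2\phi^n-\phi^{n-1}.
\]

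Next, I would test the two optimality conditions against $(u^*,\phi^*)$ and apply the three-point identity $\langle a-b,c-a\rangle=\tfrac12(\|b-c\|^2-\|a-c\|^2-\|a-b\|^2)$, in $L^2$ for the $u$-step and in the $K$-inner product for the $\phi$-step. Adding the two inequalities and setting
\[
V_n := \tfrac{1}{2\tau}\|u^n-u^*\|^2 + \tfrac{1}{2\sigma}\|\phi^n-\phi^*\|_K^2,\qquad r_n := Au^n-c,
\]
the standard manipulation yields
\[
V_n - V_{n-1} + \tfrac{1}{2\tau}\|u^n-u^{n-1}\|^2 + \tfrac{1}{2\sigma}\|\phi^n-\phi^{n-1}\|_K^2 \leq \langle r_n,\phi^n-\phi^*-\tilde\phi^{n-1}\rangle,
\]
where the constant term $\langle r_n,\phi^*\rangle=\langle u^n-u^*,A^T\phi^*\rangle=0$ because $A^T\phi^*=0$.

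The key algebraic step is to expand the right-hand side using $\tilde\phi^{n-1}=2\phi^{n-1}-\phi^{n-2}$ together with the dual-update identity $\phi^n-\phi^{n-1}=\sigma K^{-1}r_n$; this collapses the cross term into the telescoping pair $\tfrac{\sigma}{2}\bigl(\|r_n\|_{K^{-1}}^2-\|r_{n-1}\|_{K^{-1}}^2\bigr)$ plus a single residual $\tfrac{\sigma}{2}\|r_n-r_{n-1}\|_{K^{-1}}^2$. For the residual I would use $r_n-r_{n-1}=A(u^n-u^{n-1})$ to bound
\[
\tfrac{\sigma}{2}\|r_n-r_{n-1}\|_{K^{-1}}^2 = \tfrac{\sigma}{2}\langle u^n-u^{n-1},A^TK^{-1}A(u^n-u^{n-1})\rangle \leq \tfrac{\sigma k}{2}\|u^n-u^{n-1}\|^2.
\]
Thus, introducing the augmented Lyapunov functional $\widetilde V_n := V_n+\tfrac{\sigma}{2}\|r_n\|_{K^{-1}}^2$, the stepsize assumption (read as $\sigma\tau k<1$, which I take to be the intended form) makes $\widetilde V_n$ nonincreasing with decrement controlled from below by $\tfrac{\sigma}{2}\|r_{n-1}\|_{K^{-1}}^2$. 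Telescoping gives $\sum_n\|r_n\|_{K^{-1}}^2<\infty$, so in particular $r_n=Au^n-Au^*\to 0$.

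The main obstacle is the bookkeeping around the extrapolation: one must isolate the telescoping piece in $\|r_n\|_{K^{-1}}^2$ cleanly before the leftover cross term can be absorbed via $\|A^TK^{-1}A\|$. Once that rearrangement is carried out, the stepsize restriction is precisely what makes the Lyapunov decrement sign-definite. Note that the above delivers only summability (hence convergence to zero) of $\|Au^n-Au^*\|_{K^{-1}}$; the \emph{geometric} rate alluded to just before the proposition would require an additional spectral gap assumption on $A^TK^{-1}A$, which is not needed for the stated conclusion.
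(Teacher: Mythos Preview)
Your Lyapunov argument is correct and complete for the stated conclusion: the three-point identities, the collapse of the cross term via $\phi^n-\tilde\phi^{n-1}=\sigma K^{-1}(r_n-r_{n-1})$, and the bound $\|r_n-r_{n-1}\|_{K^{-1}}^2\le k\|u^n-u^{n-1}\|^2$ all check out, and telescoping under $\sigma\tau k<1$ yields $\sum_n\|r_n\|_{K^{-1}}^2<\infty$. You also correctly flagged that the stepsize hypothesis must be read as $\sigma\tau k<1$ rather than $\sigma\tau<k$.

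However, your route is genuinely different from the paper's. The paper does not use an energy argument at all: it writes the iteration as an affine map $(u^{n+1},\phi^{n+1})^T = M(u^n,\phi^n)^T+\text{const}$ with
\[
M=\begin{pmatrix} I-2\sigma\tau A^TK^{-1}A & -\sigma A^T \\ \tau K^{-1}A & I \end{pmatrix},
\]
and computes the eigenvalues of $M$ directly as $\lambda=(1-\sigma\tau\nu)\pm i\sqrt{\sigma\tau\nu-(\sigma\tau\nu)^2}$, where $\nu$ ranges over the eigenvalues of $A^TK^{-1}A$. This gives $|\lambda|=(1-\sigma\tau\nu)^{1/2}<1$ whenever $0<\sigma\tau\nu<1$, so the map is a strict contraction and convergence is geometric. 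The trade-off is clear: the paper's spectral calculation is shorter and delivers the geometric rate for free (contradicting your guess that an extra spectral-gap hypothesis would be needed---positivity of the smallest $\nu$ already supplies it), but it is specific to the purely linear, unconstrained setting where the iteration is exactly affine. Your Chambolle--Pock Lyapunov approach is the standard template that generalizes to convex $h,g^*$ and to operators that are not simultaneously diagonalizable, at the cost of yielding only $O(1/N)$-type summability rather than a rate.
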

\begin{proof}
Let $L(u,\phi) = \langle Au-c,\phi\rangle$.
At the $n$-th iteration, we have 
\begin{align*}
\phi^{n+1} & =\argmax \; L(u^n,\phi)-\frac{1}{2\tau} 
\|\phi-\phi^n\|^2_{\mathcal{H}},\\
u^{n+1}&= \argmin \; L(u,\tilde{\phi}^{n+1})+\frac{1}{2\sigma}\|u-u^n\|^2_{L^2}.
\end{align*}
Hence, we have 
\begin{align*}
\phi^{n+1}&=\phi^{n}  +\tau \left(K^{-1}(Au^n-c)\right),\\
\tilde{\phi}^{n+1}&=2\phi^{n+1}-\phi^n=\phi^n +2 \tau \left(K^{-1}(Au^n-c)\right),\\
u^{n+1}&=u^n-\sigma A^T\phi^n-2\sigma \tau A^TK^{-1}(Au^n-c),\\
\begin{pmatrix}
u^{n+1}\\ \phi^{n+1}\end{pmatrix}
& =M \begin{pmatrix}
u^{n}\\ \phi^{n}\end{pmatrix}
+\begin{pmatrix}
2\sigma \tau A^T K^{-1}c\\-\tau K^{-1}c\end{pmatrix},\\ 
\text{where }& M= \begin{pmatrix}&I-2 \sigma \tau A^T K^{-1}A  &-\sigma A^T \\ & \tau K^{-1}A & I
\end{pmatrix}.
\end{align*}
The eigenvalues of $M$ are complex conjugates
$\lambda= (1-\sigma \tau \nu) \pm i(\sigma \tau \nu-(\sigma \tau \nu)^2)^{1/2}$.
Here $i=(-1)^{1/2} $ and $\nu$ is an eigenvalue of $A^T K^{-1}A$.
So the eigenvalues of $M$ have 
\begin{align*}
    |\lambda|=(1-\sigma \tau \nu)^{1/2}.
\end{align*}

Since $\nu>0$ and $\sigma, \tau>0$ and their product  $\sigma \tau \nu<1$,we have a contraction map and the result converges geometrically.
\end{proof}

When operator $A$ is a discrete approximation of continuous differential operator and $K$ is chosen properly, one can apply the above proposition and have constant $k$ independent of the grid size.
We also provide numerical experiments in Section~\ref{sec:example_heat} to show that the convergence of the optimization problem for linear case  is independent of the grid size.
As for nonlinear conservation laws in general, we no longer have convex problems. Therefore, we no longer guarantee the convergence to the saddle point. For more details on the convergence of nonlinearly constrained optimization problems, we refer to \cite{valkonen2014primal,clason2017primal}. We leave the convergence study of this algorithm for nonlinear conservation laws in future work. 

During the computation, we check the convergence of the algorithm by checking the residuals of the system of the primal-dual equations. The residual is defined as the $L^2$ norm of the continuity equation of the $(u, \phi)$ as follows:
\begin{align}
    \text{Res}{(u,\phi)} &= \left[\|A(u)\|_{L^2},\|\nabla A(u)^T \phi\|_{L^2} \right].
\end{align}
At the saddle point solution$({u^*},{\phi^*})$ of the min-max problem \eqref{eqn:min_max}, we have $\text{Res}{({u^*},{\phi^*})} = 0.$
Here $\text{Res}({\hat{u},\hat{\phi}})$ measures the distance between the current solution  $(\hat{u},\hat{\phi})$ and the saddle point solution $({u^*},{\phi^*})$.
\section{A primal-dual approach for the discretization system}\label{sec:numerics}
In this section, we discuss in detail on using the primal-dual approach to solve implicit numerical PDE. We start with time and spatial discretizations. Next, we discuss the implementation of the primal-dual algorithm. At the end of this section, we discuss two extensions of the standard approach: mesh refinement and one-timestep updates.
\subsection{Time discretization}\label{sec:sub_time}
For the initial value problem on the time interval $[0,T]$, we use uniform mesh with length $h_t = \frac{T}{N_t}$, and denote $t_l = l h_t, u^l(x) = u(t_l,x)$ for $l = 0, ..., N_t$. 
By using the backward Euler scheme for the conservation laws, we obtain
\begin{align*}
    \begin{cases}
       & \dfrac{u^{l+1}(x) -u^l(x) }{h_t} + \partial_x \left(f\left(u^{l+1}(x)\right)\right) - \partial_{x}(\gamma \partial_{x} u^{l+1}(x)) = 0,\quad l = 0, ..., N_t-1,\\
& u^0(x) = u_0(x).       
    \end{cases}
\end{align*}
The min-max problem \eqref{eqn:min_max} with discretization in time is as follows:
\begin{align}\label{eq:min_max_Lt}
    \min_{u \in \{u^l(x)\}} \max_{\phi \in \{\phi^l (x)\}} L_t(u,\phi),
\end{align}
where 
\begin{align*}
    L_t(u,\phi) & = h_t \sum_{\substack{0 \leq l \leq N_t-1}} \int_{\Omega} \phi^l(x) \left(  \dfrac{u^{l+1}(x) -u^l(x) }{h_t} + \partial_x \left(f\left(u^{l+1}(x)\right)\right) - \partial_{x}\left(\gamma \partial_{x} u^{l+1}(x)\right)\right) dx \\
    & + \int_{\Omega}\phi^{N_t}(x)u^{N_t}(x) - \phi^0(x)u^0(x) dx\\
    & = - h_t \sum_{\substack{1 \leq l \leq N_t}} \int_{\Omega} u^l(x) \left(\dfrac{\phi^l(x) - \phi^{l-1}(x)}{h_t} + \partial_{x}\left(\gamma \partial_{x} \phi^{l-1}(x)\right) \right) +  \partial_x\phi^{l-1}(x) {f\left(u^{l}(x)\right)} dx \\
    & + \int_{\Omega}\phi^{N_t}(x)u^{N_t}(x) - \phi^0(x)u^0(x) dx.
\end{align*}

The last equality is obtained via summation by parts and integration by parts in the spatial domain with periodic boundary condition on $\Omega$. By taking the first order variational derivative with respect to $u^l(x)$ for the last two lines, we obtain a forward Euler scheme for the dual equation of $\phi$
\begin{align*}
   \begin{cases}
      \dfrac{\phi^l(x) - \phi^{l-1}(x)}{h_t} +  \partial_x\phi^{l-1}(x) f'(u^l(x))  + \partial_{x}\left(\gamma \partial_{x} \phi^{l-1}(x)\right) = 0, ,\quad l = 1, ..., N_t,\\
      \phi^{N_t}(x) = 0. 
   \end{cases} 
\end{align*}

This is also an implicit scheme since we have $\phi$ with given terminal time condition. It is also straightforward to use other higher order implicit schemes, for instance, the backward differentiation formulas (BDFs). Specifically, one can first discretize the Equation \eqref{eq:conservation_law_gamma} using BDF2. By summation by parts over the time index, integration by parts in the spatial domain, and taking first order variational derivative, we automatically obtain the BDF2 for the dual equation $\phi$.
\subsection{Spatial discretization}
For the primal-dual framework, the choice of spatial discretization is quite flexible. Here we present both the finite difference scheme and the discontinuous Galerkin methods. For simplicity, we consider $\Omega = [0,b]$ with periodic boundary condition. We use a uniform mesh with $h_x = \frac{b}{N_x}$, for $ N_x>0$ and $x_j = j h_x, j = 0, ..., N_x-1$. We also apply the backward Euler formulation from the  Section \ref{sec:sub_time}.
\subsubsection{Finite difference scheme for the heat equation}
Consider a heat equation
\begin{align}\label{eq:heat_eqn}
    \begin{cases}
     &   u_t - \partial_{x}\left(\gamma \partial_{x} u\right)= 0,\\
     & u(x,0) = u_0(x),
    \end{cases}
\end{align}
with $\gamma(x)>0$. We use standard central difference scheme and denote $Lap(v)_j \approx\partial_{x}\left(\gamma \partial_{x} v\right)$ as its approximations, which is defined as follows:
\begin{align*}
   Lap(v)_j := \dfrac{\gamma_{j+\frac{1}{2}}\left( v_{j+1} -v_j\right) - \gamma_{j -\frac{1}{2}} \left(v_j -v_{j-1}\right)}{{h_x}^2}. 
\end{align*}
Therefore, the discretized min-max problem \eqref{eqn:min_max} takes the following finite difference scheme formulation:
\begin{align}
\label{eq:min_max_dis_heat}
  \min_{u \in \{u^l_j\}} \max_{\phi \in \{\phi^l_j\}} L(u,\phi),
\end{align}
where
\begin{align*}
     L(u,\phi) & =h_t h_x\sum_{\substack{0 \leq l \leq N_t-1\\  1\leq j \leq N_x}}  \phi^l_j \left(  \dfrac{u^{l+1}_j -u^l_j }{h_t} -   Lap(u^{l+1})_j \right) \\
     & = - h_t h_x  \sum_{\substack{1 \leq l \leq N_t\\  1\leq j \leq N_x}}    u^l_j \left(\dfrac{\phi^l_j - \phi^{l-1}_j}{h_t} +  Lap(\phi^{l-1})_j \right) 
 + h_x\sum_{\substack{1\leq j \leq N_x}}\left( \phi^{N_t}_j u^{N_t}_j - \phi^0_j u^0_j \right).
\end{align*}
The saddle point of the min-max problem \eqref{eq:min_max_dis_heat} is:
\begin{align}
    \begin{cases}
       \dfrac{u^{l+1}_j -u^l_j }{h_t} -   Lap(u^{l+1})_j = 0, \quad 0 \leq l \leq N_t-1,\; 1\leq j \leq N_x,\\
       u^0_j = u_0(x_j), \quad 1\leq j \leq N_x,\\
      \dfrac{\phi^l_j - \phi^{l-1}_j}{h_t} +   Lap(\phi^{l-1})_j =0, \quad 1 \leq l \leq N_t,\; 1\leq j \leq N_x,\\
      \phi^{N_t}_j = 0, \quad 1\leq j \leq N_x.
    \end{cases}
\end{align}
\subsubsection{Discontinuous Galerkin methods for linear transport equations}\label{subsec:dg_linear}
We first take partition of the interval $(0,b)$ into $N$ cells, then we have 
\begin{align*}
   & 0 = x_{\frac{1}{2}}<  x_{\frac{3}{2}} <  ... <  x_{N + \frac{1}{2}} = b,\\
    & h = h_j =  x_{j + \frac{1}{2}} -  x_{j- \frac{1}{2}}, I_j = \left( x_{j - \frac{1}{2}} , x_{j +  \frac{1}{2}} \right).
\end{align*}
Then we define the finite element space $V_h^k$ in the follows:
\begin{align*}
    V_h^k := \left\{ v\in L^2([0,b]): v|_{j_j} \in P^k(I_j), i = 1,...,N \right\}.
\end{align*}
For discontinuous Galerkin method, we seek for  $u_h(\cdot, t) \in V_h$, such that for any $v\in V_h$ we have
\begin{equation*}
\label{e2}
\frac{d}{dt} \int_{j_j} u_h(x,t) v(x) dx - \int_{j_j} f(u_h(x,t)) v'(x) dx + \hat{f}_{j+1/2} v(x_{j+1/2}^-)
- \hat{f}_{j-1/2} v(x_{j-1/2}^+) = 0,
\end{equation*}
where the numerical flux $\hat{f}_{j+1/2}$ can be any monotone flux:
$$
\hat{f}_{j+1/2} = \hat{f}( u_h(x_{j+1/2}^-,t), u_h(x_{j+1/2}^+,t))
$$
with $\hat{f}$ being monotonically increasing (non-decreasing) for the first argument and
monotonically decreasing (non-increasing) for the second argument.
For the simple case $f'(u) \geq 0$ for the initial condition, we can use the upwinding flux $\hat{f}(u^-,u^+) = f(u^-)$.
For each cell, we take two points $x_{j,1} = x_{j-\frac{1}{4}}, x_{j,2} = x_{j+\frac{1}{4}}$. Then we take the basis for $V_h$ to consist of the following $2N$ piecewise linear functions $ \{ \varphi_{j,\ell} (x) : \ell = 1, 2; i=1, ..., N \}$,
which satisfy
\begin{align*}
  \varphi_{j,1} (x_{j,1}) = 1, \quad \varphi_{j,1} (x_{j,2}) = 0, \quad \varphi_{j,1} (x)=0 \mbox { if } x \notin I_j,\\
  \varphi_{j,2} (x_{j,1}) = 0, \quad \varphi_{j,2} (x_{j,2}) = 1, \quad \varphi_{j,2} (x)=0 \mbox { if } x \notin I_j.
\end{align*}

If we denote
$
u_j = \left(
\begin{array}{c}
u_{j,1} \\ u_{j,2}
\end{array} \right)
$ for the linear transport equation with $f(u) = \alpha u, \alpha >0, \gamma = 0$, we obtain the following
\begin{equation}
\label{eqn:dg_linear}
\begin{aligned}
    \frac{d}{dt}u_j(t) = A_1 u_j + A_2 u_{j-1},\\
\end{aligned}
\end{equation}
where 
\begin{equation*}
A_1 =  \frac{\alpha}{h_x}  \begin{pmatrix}
-\frac{7}{4} & -\frac{3}{4}\\
\frac{11}{4} & -\frac{9}{4}
   \end{pmatrix},\quad
A_2=    \frac{\alpha}{h_x} \begin{pmatrix}
-\frac{5}{4} & \frac{15}{4}\\
\frac{1}{4} & -\frac{3}{4}
   \end{pmatrix}.
\end{equation*}
Combining Equation \eqref{eqn:dg_linear} with time discretization \eqref{eq:min_max_Lt}, we obtained the discretized min-max problem \eqref{eqn:min_max} :
\begin{align}
\label{eq:min_max_dis_transport}
  \min_{u \in  {\{u^l_j\}} }\max_{\phi \in \{\phi^l_j\}} L(u,\phi),
\end{align}
where
\begin{align*}
     L(u,\phi) & =h_t h_x\sum_{\substack{0 \leq l \leq N_t-1\\  1\leq j \leq N_x}}   \langle  \phi^l_j,   \dfrac{u^{l+1}_j -u^l_j }{h_t} - A_1u^{l+1}_j - A_2 u^{l+1}_{j-1}  \rangle, \\
     & = - h_t h_x  \sum_{\substack{1 \leq l \leq N_t\\  1\leq j \leq N_x}}     \langle  u^l_j, \dfrac{\phi^l_j - \phi^{l-1}_j}{h_t}  + A_1^T \phi^{l-1}_j  + A_2^T \phi^{l-1}_{j+1} \rangle
 + h_x\sum_{\substack{1\leq j \leq N_x}}\left(\langle \phi^{N_t}_j u^{N_t}_j\rangle - \langle\phi^0_j u^0_j \rangle\right).
\end{align*}
By taking the first-order optimality condition, we obtain the saddle point of the min-max problem \eqref{eq:min_max_dis_transport} as follows:
\begin{align}
    \begin{cases}
        \dfrac{u^{l+1}_j -u^l_j }{h_t} - A_1u^{l+1}_j - A_2 u^{l+1}_{j-1} = \left(\begin{array}{c}
0 \\ 0
\end{array}\right),\quad 0 \leq l \leq N_t-1,\; 1\leq j \leq N_x,\\
       u^0_j = \left(\begin{array}{c}
u_0(x_{j-\frac{1}{4}}) \\u_0(x_{j+\frac{1}{4}}) 
\end{array}\right), \quad 1\leq j \leq N_x,\\
       \dfrac{\phi^l_j - \phi^{l-1}_j}{h_t}  + A_1^T \phi^{l-1}_j  + A_2^T \phi^{l-1}_{j+1}  = \left(\begin{array}{c}
0 \\ 0
\end{array}\right),\quad 1 \leq l \leq N_t,\; 1\leq j \leq N_x,\\
      \phi^{N_t}_j =\left(\begin{array}{c}
0 \\ 0
\end{array}\right), \quad 1\leq j \leq N_x.
    \end{cases}
\end{align}

\subsubsection{Discontinuous Galerkin methods for quadratic conservation laws}
\label{subsec:dg_quadratic}
In this part, we discuss on numerical approximations of DG methods for quadratic conservation laws where $f(u) = \alpha u^2 + \beta u, \alpha>0,\beta>0$. We follow the same setup as in Section \ref{subsec:dg_linear} and use linear basis functions and upwind flux.
The DG scheme can be written in the following form:
\begin{equation}
\label{eqn:quad_foward_DG}
\frac{d}{dt} u_j(t) = \left( \begin{array}{c} u_{j-1}^T C_1 u_{j-1} \\ u_{j-1}^T C_2 u_{j-1} \end{array} \right)
+ \left( \begin{array}{c} u_{j}^T C_3 u_{j} \\ u_{j}^T C_4 u_{j} \end{array} \right)
+ \left( \begin{array}{c} u_{j+1}^T C_5 u_{j+1} \\ u_{j+1}^T C_6 u_{j+1} \end{array} \right)
+ C_7 u_{j-1} + C_8 u_{j} + C_9 u_{j+1}.
\end{equation}
The details of the matrices $C_i, i = 1,...,9$ can be find in Equation~\eqref{eqn:dg_coeff} in the Appendix.
For solving the quadratic conservation law \eqref{eqn:quad_conv_sys}, we solve the corresponding min-max problem \eqref{eqn:min_max} with the following discretized system:
\begin{align}
    \label{eq:min_max_quad_discretized}
      \min_{u \in  {\{u^l_j\}} }\max_{\phi \in \{\phi^l_j\}} L(u,\phi),
\end{align}
where the definition of $L$ can be found in the appendix Equation~\eqref{eqn:dis_L_quad_details}.

The corresponding saddle point of the discrete min-max problem \eqref{eq:min_max_quad_discretized} is presented in Equation~\eqref{eqn:dis_minmax_quad_details}.

\subsection{Primal-dual algorithm}
In section \ref{sec:continuous}, we have discussed how the generalized primal-dual algorithm solves the saddle point problem from an initial value problem. In this part, we solve the finite dimensional saddle point problem based on the numerical approximations of the PDEs. We use IVP \eqref{subsec:dg_linear} as an example and present details of the algorithm.

In the min-max problem \eqref{eqn:min_max}, we omit the initial-terminal condition when discussing the algorithm. In practice, there is no harm to introduce an extra dual variable $\lambda_j, j = 1,...,N,$ to impose the conditions. The discretized min-max system can be written as follows:
\begin{align}
    \min_{u} \max_{\phi,\lambda} L(u,\phi,\lambda),
\end{align}
where
\begin{align*}
         L(u,\phi,\lambda) & =h_t h_x\sum_{\substack{0 \leq l \leq N_t-1\\  1\leq j \leq N_x}}   \langle  \phi^l_j,   \dfrac{u^{l+1}_j -u^l_j }{h_t} - A_1u^{l+1}_j - A_2 u^{l+1}_{j-1}  \rangle  + h_x\sum_{\substack{1\leq j \leq N_x}} \langle \lambda_j , u^0_j - u_0(x_j)\rangle\\
     & = - h_t h_x  \sum_{\substack{1 \leq l \leq N_t\\  1\leq j \leq N_x}}     \langle  u^l_j, \dfrac{\phi^l_j - \phi^{l-1}_j}{h_t}  + A_1^T \phi^{l-1}_j  + A_2^T \phi^{l-1}_{j+1} \rangle\\
& \quad + h_x\sum_{\substack{1\leq j \leq N_x}}\left(\langle \phi^{N_t}_j, u^{N_t}_j\rangle - \langle \lambda_j -\phi^0_j, u^0_j \rangle -\langle \lambda_j , u_0(x_j)\rangle\right).
\end{align*}

The primal-dual approach for solving the linear transport equation is summarized in the following Algorithm \ref{alg:linear_equation}.

\begin{algorithm}
\caption{Solve linear transport equation via primal-dual approach.
}\label{alg:linear_equation}
\begin{flushleft}
\hspace*{\algorithmicindent} \textbf{Input:} $N_t,N >0$, initial guess $(u_j^l)^0,(\phi_j^l)^0,(\lambda_j)^0, l = 0,...,N_t, j = 1,...,N$; stepsizes $\tau_{z}, z\in \{u,\phi, \lambda \}$; residual tolerance $\epsilon$.\\
\hspace*{\algorithmicindent} \textbf{Output:} $(u_j^l)^n,(\phi_j^l)^n,(\lambda_j)^n$ for $n = n^*$.
\end{flushleft}
\begin{algorithmic}
    \While {iteration  $n<\mathcal{N}_{\text{max}}$ and Res$(u^n,\phi^n) > \epsilon$}
    \State{1. Primal updates \text{for} $\;l = 1,...,N_t, j = 1,...,N$:}
    \State{$(u^l_j)^n = (u^l_j)^{n-1} - \tau_u \left( \dfrac{(\tilde{\phi}_j^{l})^{n-1} - (\tilde{\phi}_j^{l-1})^{n-1}}{h_t}  + A_1^T (\tilde{\phi}_j^{l-1})^{n-1}  + A_2^T (\tilde{\phi}_{j+1}^{l-1})^{n-1}  \right)$, }
    \State{$(u^0_j)^n =  (u^0_j)^{n-1} -\tau_{u_0} ((\tilde{\phi}_j^{0})^{n-1} - (\tilde{\lambda}_j)^{n-1}  )$.}
    \State{2. Dual updates for $l = 0,...,N_t-1, j = 1,...,N,$:}
    \State{$(\phi^l_j)^n = (\phi^l_j)^{n-1} -\tau_{\phi} K_d^{-1} \left( \dfrac{(u^{l+1}_j)^n - (u^l_j)^n }{h_t} - A_1(u^{l+1}_j)^n  - A_2 (u^{l+1}_{j-1})^n \right),$}
    \State{$ (\lambda_j)^n =  (\lambda_j)^{n-1} + \tau_{\lambda}((u^0_j)^n - u_0(x_j)).$}
    \State{3. Extrapolation step for dual variables for all $l,j$:}
    \State{$(\tilde{\phi}_j^{l})^{n} = 2 ({\phi}_j^{l})^{n} - ({\phi}_j^{l})^{n-1},$}
    \State{$(\tilde{\lambda}_j)^{n} = 2 ({\lambda}_j)^{n} - ({\lambda}_j)^{n-1}.$  }
    \State{$n \gets n+1$}
    \EndWhile 
\end{algorithmic}
\end{algorithm}

The operator $K_d$ denotes the discrete operator of $K$ defined in Equation~\ref{eq:K_linear0}.
This step requires solving a Laplacian equation, which can be done efficiently via FFT.
As for the  initial guess of the optimization problem, without prior knowledge, we can use  $[(u_j^l)^0,(\phi_j^l)^0,(\lambda_j)^0] = [u_0(x_j),0,0], l = 0,...,N_t, j = 1,...,N$. The updates for $[u_j^l,\phi_j^l,\lambda_j$ from the Algorithm \ref{alg:linear_equation} are independent of each other in terms of index $(j,l)$. Hence, we can update  variables simultaneously for all space-time, which means the updates are highly parallelizable. The stopping criteria needs to be understood as each entry of Res$(u^n,\phi^n)$ satisfies $ \leq \epsilon$. The choice of $\epsilon$ varies, depending on the usage of the solution.

When the PDE is nonlinear, for instance, $f(u) = \alpha u^2 + \beta u$,  we choose  $K_d$ as the discrete operator of $(-\partial_{tt} + c^2 \partial_{xx})$ as discussed in Equation \ref{eq:K_linear}, where $c$ is an estimation of $f'(u)$ based on the initial condition. In practice, we can set  $c = 1$. The stepsizes of the optimization algorithm then need to be adjusted accordingly. 
\subsection{Mesh refinement in time approach}\label{subsec:mesh_refine}
We have observed that due to the lack of convexity, the primal-dual algorithm is not very robust and sometimes sensitive to the optimization stepsizes and initial data. We discuss one stabilizing technique here. 
The idea is to do mesh refinement in time. Since large $h_t$ brings in more numerical viscosity, it is expected that solving the problem on a coarse-mesh grid is easier and gives a smooth  numerical approximation of the solution.
We can do  some simple interpolation with the solution on coarse-mesh  and use it as an initial guess for fine-mesh  case. 
This extension is summarized in Algorithm~\ref{alg:mesh_refinement}, using vanilla version Algorithm~\ref{alg:linear_equation} repeatedly while refining the mesh in time. 
\begin{algorithm}
\caption{Generalize the primal-dual approach with mesh refinement.
}\label{alg:mesh_refinement}
\begin{flushleft}
   \hspace*{\algorithmicindent} \textbf{Input:} $N_t = 2^{m_0},N >0$, initial guess $(u_j^l)^0,(\phi_j^l)^0,(\lambda_j)^0, l = 0,1, j = 1,...,N$; stepsizes $\tau_{z}, z\in \{u,\phi, \lambda \}$; residual tolerance $\epsilon$, $N_{t_0} = 1$.\\
    \hspace*{\algorithmicindent} \textbf{Output:} $(u_j^l)^n,(\phi_j^l)^n,(\lambda_j)^n$ for $n = n^*$ , $l = 0,...,N_t, j = 1,...,N$.
\end{flushleft}
\begin{algorithmic}
    \While{$N_{t_0} \leq 2^{m_0}$}
    \State{ Solve the IVP on mesh $N \times N_{t_0}$ in the space-time domain using Algorithm \ref{alg:linear_equation}, obtain solution  $(u,\phi,\lambda)^*_{N \times N_{t_0} }$.}
\State{Interpolate the solution on a finer mesh $(u,\phi,\lambda)_{N \times 2N_{t_0}}$. }
\State{Use the above solution as an initial guess for discrete problem on mesh $N \times 2N_{t_0}$ }.
    \State{$N_{t_0} \gets 2 N_{t_0}$.}
    \EndWhile 
\end{algorithmic}
\end{algorithm}

\subsection{An iterative approach for solving implicit scheme for one-timestep}\label{subsec:one-step} 
In previous sections, we use the primal-dual framework to solve the initial value problem (IVP) defined on the interval $[0,T]$. It is natural to see that this framework could also be applied to solve the IVP on a smaller time interval $[0, h_t]$. Therefore, we can adapt the primal-dual approach as an iterative solver for solving one-timestep,  i.e., $[t, t+h_t]$, of the implicit discrete system. 
Unlike the vanilla primal-dual approach where the solutions are updated simultaneously over all space-time, the one-timestep approach only have iterations for solutions at time $t=t_l$ when solving over $[t_l-h_t, t_l]$. When the numerical solution is found at $t = t_l$ (the stopping criteria for the residuals is satisfied), the algorithm proceeds to $[t_l,t_l+ht]$, solving solutions at $t = t_{l+1}$.
For consistency, the residual is defined as Res$(u,\phi) = \left[\|A(u)\|_{L^2(\Omega \times \{t\})},\|\nabla A(u)^T \phi\|_{L^2(\Omega \times \{t\})} \right].$

\begin{algorithm}[H]
\caption{Use the primal-dual approach as an iterative solver.
}\label{alg:one-step}
\begin{flushleft}
    \hspace*{\algorithmicindent} \textbf{Input:} $N_t, N >0$; stepsizes $\tau_{z}, z\in \{u,\phi, \lambda \}$; residual tolerance $\epsilon$, $l=0$\\
    \hspace*{\algorithmicindent} \textbf{Output:} $(u_j^l)^*$ for $l = 0,...,N_t, j = 1,...,N$.
\end{flushleft}
\begin{algorithmic}
\State{$(u_j^0)^* = u_0(x_j)$}
    \While{$l < N_t$}
    \State{Set initial guess as $(u_j^k)^0 = (u_j^{l})^*$ for $k = l, l+1$.}
    \State{Apply Algorithm \ref{alg:linear_equation} to solve IVP on $[l h_t, (l+1)h_t]$ with initial data given as $u(l h_t,x_j) = (u_j^l)^*$. The mesh is discretized as $N \times 2$ in space-time domain.}
    \State{Obtain solution $(u_j^k)^{n^*}, k = l, l+1$.}
\State{$(u_j^l)^* = (u_j^l)^{n^*}$. }

    \State{$l \gets l+1$.}
    \EndWhile 
\end{algorithmic}
\end{algorithm}

\section{Numerical Examples}\label{sec:example}
In this section, we present numerical experiments on three types of equations: the heat equation, the linear transport equation, and the quadratic conservation laws. 
We use the conservation law example to show that our framework not only works for linear equations, but also can be generalized to accommodate nonlinear conservation laws. Besides, we test on various initial data: smooth or with discontinuities.
\subsection{Variable coefficient type heat equation}\label{sec:example_heat}
In this example, we apply the primal-dual approach to solve the second order parabolic equation
\begin{align*}
\begin{cases}
     &\partial_t u = \partial_x \left(\gamma \partial_{x} u\right)  , \quad (x,t) \in [0,1]\times[0,0.1],\\
    & u(x,0) = \exp{(-64{(x-0.5)^2})}, \quad \text{for } x\in  [0,1],
\end{cases}
\end{align*}
where the coefficient function satisfies $\gamma(x) = 0.5 + 0.1 \sin{(2\pi x)}$.

We solve the initial value problem on different meshes using the backward Euler scheme in time and the finite difference scheme in space,
and record the number of iterations needed to guarantee the residuals satisfies the error tolerance for two values of $\epsilon$ in Table \ref{tab:1}.
The norm used in the proximal update for the dual variable $\phi$ is chosen as
\begin{align*}
    \|v\|_{\mathcal{H}}^2 = \|\partial_t v\|_{L^2}^2  + 0.6^2  \|\partial_{xx} v\|_{L^2}^2. 
\end{align*}
Note here we use the stepsizes $(\tau_{u},\tau_{\phi},\tau_{\lambda})=(0.8,0.8,0.99)$ in the primal-dual approach for above problems on different meshes.
Since we are using implicit schemes, the discretized timestep is no longer restricted by the CFL condition. Typically, for an explicit scheme we need to satisfy a restrictive condition, i.e.,  $\frac{h_t}{(h_{x})^2}  < \frac{1}{2\max_x \gamma(x)}$.
However, in the above examples we have $\frac{h_t}{(h_{x})^2} =\frac{128}{5}, \frac{256}{5},...,\frac{2048}{5}$.
We can see from Table~\ref{tab:1} that the convergence of the algorithm for this linear equation is independent of the grid size. 

\begin{table}[!htp]\centering
\caption{Number of iterations needed to achieve certain errors for the primal-dual approach for different mesh.}\label{tab:1}
\begin{tabular}{l|rrrrrr}\toprule
$N_t \times N$  & $64 \times 16$ & $128 \times 32$ & $256 \times 64$ & $512 \times 128$ & $1028 \times 256$ \\ \hline
$\epsilon = 10^{-6} $ &90 &91 &92 &94 &102 \\
\hline
$\epsilon = 10^{-10} $ &144 &146 &147  &149 &163 \\
\bottomrule
\end{tabular}
\end{table}

\subsection{Linear transport equation}
In this example, we use piecewise linear discontinuous Galerkin method in the spatial domain and BDF2 in time to solve the following linear transport equation with $\alpha = 2$:

\begin{align*}
    \begin{cases}
       \partial_t  u  +  \alpha \partial_x u = 0, \quad \text{for } (x,t)\in[0,1]\times[0,0.5],\\
       u(x,0) = \sin(2 \pi x).
    \end{cases}
    \end{align*}

Thanks to the implicit scheme, we can choose use a time step much larger than the CFL-allowed time step for explicit schemes.
In this example, we have $\frac{\alpha h_t}{h_x} = 2$, while for piecewise linear DG method, the CFL number for explicit scheme is around $\frac{1}{3}$ (see  \cite{cockburn2001runge}).
We plot the numerical results in the Figure~\ref{fig:eglinear}.
On the right, we see the residuals Res$(u^n,\phi^n)$ decrease, while the distance between the approximated solution $u^n$ and the analytical solution $u^*$ reaches some positive value. This difference comes from the numerical approximation part, as the numerical scheme we are using are second order in space time. We also numerically verify that the  solutions we obtained from the primal-dual approach are indeed of second order accuracy. This is presented in Table~\ref{tab:2_linear_eqn_error}, where we refine the mesh both in space and in time and record the 
$L^2$ error $e_h =\|u_h-u_g\|_{L^2(\Omega\times [0,T])}$. Here $u_h$ denotes the numerical approximation over mesh $h_t,h_x$.

	\begin{figure}[htbp!]
		\includegraphics[width=0.950\textwidth]{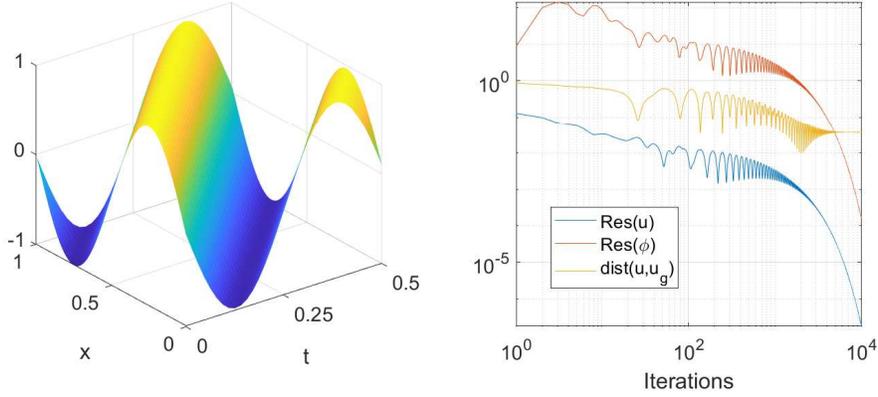}
		\caption{Left: numerical solution of the linear transport equation with smooth initial data, $N\times N_t = 64\times 32$. Right: change of residual Res$(u^n,\phi^n)$ and $\|u-u_g\|_{L^2}$ with respect to the primal-dual iteration, where $u_g$ is the analytical solution to the IVP. The optimization stepsizes are~$(\tau_{u},\tau_{\phi},\tau_{\lambda})=(3,0.1,0.99)$. }
		\label{fig:eglinear}	
	\end{figure}

Since the numerical scheme we are using is second order in space and time, we verify it numerically. We record the error of the solution below in Table~\ref{tab:2_linear_eqn_error}. From the table, we see that the solution is indeed of second order accuracy. 
\begin{table}[!htp]\centering
\caption{Errors of approximated solutions}\label{tab:2_linear_eqn_error}
\begin{tabular}{l|rrrr}\toprule
$h=h_x(h_t)$ & $2^{-5}$ & $2^{-6} $& $2^{-7}$ \\ \hline
$L^2$ error $e_h$ & $0.1296$ & $0.0372$ & $0.0096$ \\
$\log_2({e_{h}}/{e_{\frac{h}{2}})}$& $1.8007$ &$1.9542$ & - \\
\bottomrule
\end{tabular}
\end{table}

As for initial data with discontinuity, we apply the primal-dual approach to solve the following IVP:
\begin{align*}
    \begin{cases}
      \partial_t  u  +  \alpha \partial_x u = 0, \quad \text{for } (x,t)\in[0,1]\times[0,0.25],\\
      u(x,0) = \begin{cases}
          & 1, \quad \text{if} \quad0.25 \leq x \leq 0.75,\\
          & 0 \quad \text{else.}
      \end{cases}
    \end{cases}
\end{align*}
We use BDF2 for time discretization, and linear DG for space discretization. Here, $\alpha = 2, \frac{\alpha h_t}{h_x} = 1$. The numerical results are shown in Figure~\ref{fig:eglinear_dis}. We can see that our primal-dual approach solves IVP with discontinuous initial data successfully, where the solution have oscillations locally.
\begin{figure}[htbp!]
		\includegraphics[width=0.950\textwidth]{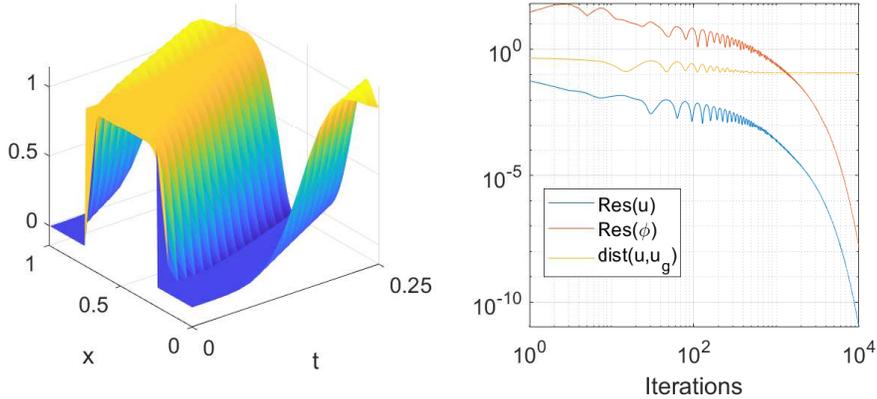}
		\caption{Left: numerical solution of the linear transport equation with discontinuous initial data, $N\times N_t = 64\times 32$. Right: change of residual Res$(u^n,\phi^n)$ and $\|u-u_g\|_{L^2}$ with respect to the primal-dual iteration, where $u_g$ is the analytical solution to the IVP. The optimization stepsizes are $(\tau_{u},\tau_{\phi},\tau_{\lambda})=(3,0.1,0.99)$.}
		\label{fig:eglinear_dis}	
\end{figure}
\subsection{Quadratic conservation law}
Consider a quadratic conservation law of the following form:
\begin{align}\label{eqn:num_quad_eg}
   & \partial_t u + \partial_x ( \alpha u^2 + \beta u) = 0, \quad \text{for } (x,t)\in[0,2]\times[0,1],\\
    &      u(x,0) = \begin{cases}
          & 0.1, \quad \text{if} \quad 1\leq x \leq 2,\\
          & 0.25 \quad \text{else.}
      \end{cases}
\end{align}
Here $\alpha = -1, \beta = 1$, we have the traffic equation.
Since the initial data of this given problem satisfies $f'(u) \geq 0$, we can use upwinding flux to approximate the term $\partial_x f(u)$. If the inequality does not hold, we can always introduce $s>0$ such that $f'(u) +s\geq 0$ for $u_0(x)$, and solve  equation $\partial_t v + \partial_x (f(v) + sv) = 0$ for $v(x,t) = u(x -st,t)$ via  change of variables. We can use upwinding flux for the later equation, and recover $u(x,t)$ correspondingly.
We use optimization stepsizes $(\tau_{u},\tau_{\phi},\tau_{\lambda})=(0.4,0.4,0.99)$ on the mesh $256\times 32$ with backward Euler in time.
The error tolerance $\epsilon = 10^{-3}$. It takes $n = 4573$ iterations to have the residual satisfies the stopping criteria.
The numerical results are presented in Figure~\ref{fig:eg_quad}, where we see the shock propagation (left) and development of rarefaction wave (right). 
\begin{figure}[htbp!]
		\includegraphics[width=0.950\textwidth]{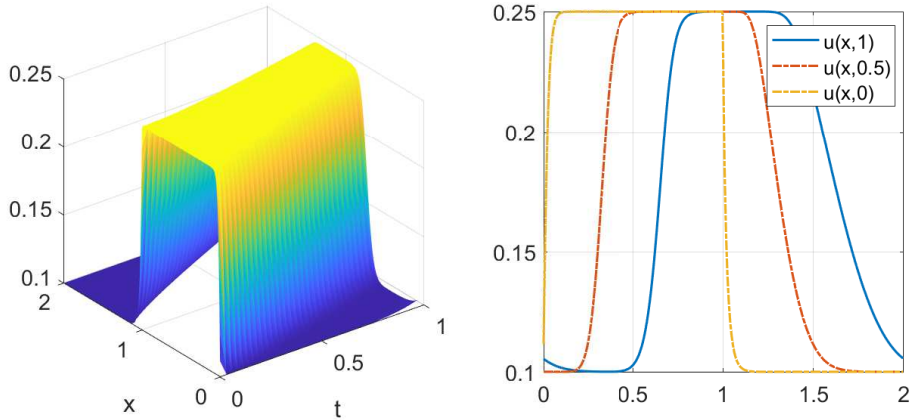}
		\caption{Left: numerical solution of the traffic equation with discontinuous initial data, $N\times N_t = 256\times 32$. Right: numerical solution at time $t = 0, 0.5, 1$. }
		\label{fig:eg_quad}	
\end{figure}

\subsubsection{A comparison of different variations of the primal-dual approach.}
For the nonlinear case, the choice of optimization stepsizes are quite sensitive for solving the non-convex saddle point problem. A good initial guess of the saddle point can always stabilize and boost the convergence of the primal-dual approach. Indeed, the variants of primal-dual approach we discussed in Sections \ref{subsec:mesh_refine} and \ref{subsec:one-step} can be understood as an improvement in the initial guess. For the mesh refinement approach, we take the solution from coarse grid as an estimation of the solution and use it the initial guess for the optimization. As for the one-timestep approach, since we are only considering the evolution of the initial data within a short period of time, a constant extension of function $u_{est}(x,t) =u_0(x)$ can be treated as a reasonable estimate of the solution.  
In this part, we compare  the vanilla primal-dual approach, the coarse-to-fine mesh variants, and the one-timestep method to compute the IVP~\ref{eqn:num_quad_eg}. We use BDF2 and backward Euler and set $\epsilon =  10^{-3}$.
We record the computation cost for each approach to achieve the residual tolerance.
Specifically, for the mesh refinement approach, we solve the IVP while refining the mesh with $N\times N_t = 256\times 2^k,$ \text{for} $ k = 3,4,5,$ in order. For $k=3,4$, we apply fixed number of iteration $n=1000$.
As for the one-timestep method, each time we apply the primal-dual approach with error tolerance $\epsilon=  10^{-3}$.

\begin{table}[!htp]\centering
\caption{Comparison of different variants of the primal-dual approach}\label{tab:compare}
\begin{tabular}{|p{0.2\linewidth} |p{0.23\linewidth}|p{0.23\linewidth}|p{0.23\linewidth} |}\toprule
method & number of PDHG iteration & world time (if everything parallel) & equivalent number of operation (computation)  \\ \hline
primal-dual (vanilla) & $4573$ & $4573$ & $150909N$  \\ \hline
mesh-refinement &- & $5601$ & $144833N$ \\ \hline 
one-timestep & $8627$  & $8627$ & $17254 N$ \\
\bottomrule
\end{tabular}
\end{table}
In Table \ref{tab:compare}, the number of iterations refers to how many iterations each algorithm takes. Specifically, for the mesh-refinement approach, the numbers of iterations for optimization over different meshes are $[10^3,10^3,3601]$, $N\times N_t = 256\times 2^k,$ \text{for} $ k = 3,4,5$. As for the one-timestep method, the total number of iterations is the summation $\sum_{l} n^*_l$, where $n_l^*$ is the iterations to solve the forward problem over the time interval $[l h_t, (l+1)h_t]$. If all updates can be done in a completely parallel manner over all indices $(j,l)$, then the time, where the unit is the time to complete a pair of primal-dual updates, is summarized in the third column. We can see here that with sufficient computation power, the vanilla approach takes the least time. If we instead consider the computation cost, where we treat  updating a pair of primal-dual variables at certain index $(j,l)$ as a unit of operations, then the equivalent number of operations are calculated accordingly, taking into account of the mesh parameter $N,N_t$. For presentation purpose, we keep the notation of $N$, and plug in the value $N_t =32$. One can see that the actual one-timestep approach costs the least computation. Comparing with the vanilla approach, one-timestep avoids inaccurate information propagating along time. The solution is calculated precisely in a  forward propagation manner. 
\section{Summary}\label{sec:summary}
In this paper, we propose a novel primal-dual approach for implicitly solving conservation laws.  The proposed saddle problem depends on the dual variable linearly. This approach connects the first-order optimization with scalar conservation laws, which integrates the idea of primal-dual hybrid gradient algorithm and accommodate precondition naturally.
Despite drawbacks from potential non-convexity, this approach is stable and converges in the practice.
This framework is easy for implementation and highly parallelizable. Moreover, the structure of primal-dual approach has the flexibility that can be adapted to various high order scheme in the spatial domain, including finite difference schemes and DG. In future work, we shall study on the primal-dual hybrid gradient methods for computing high dimensional regularized conservation laws implicitly in time with TVD (Total variation diminishing), ENO (essentially non-oscillatory), WENO (weighted essentially non-oscillatory), and neural network discretizations.

\noindent \textbf{Acknowledgement}: The authors would like to thank Richard Tsai for the insightful discussion.

\appendix                                     

\section{Details on discontinuous Galerkin methods for quadratic conservation laws}
The coefficient matrices in Equation~\ref{eqn:quad_foward_DG} are defined as follows:
\begin{equation}
\begin{aligned}\label{eqn:dg_coeff}
    C_1 = \frac{5\alpha}{8h_x}\begin{pmatrix}
     1 &-3\\
     -3 &9
    \end{pmatrix}, 
       C_2 = \frac{-\alpha}{8 h_x}\begin{pmatrix}
    1 &-3\\
    -3 &9
    \end{pmatrix},\\
       C_3 = \frac{-\alpha}{8 h_x}\begin{pmatrix}
     13 &1\\
     1 &5
    \end{pmatrix},
C_4 = \frac{\alpha}{8 h_x}\begin{pmatrix}
     9 &13\\
     13 &-31
    \end{pmatrix},\\ 
C_7 = \frac{\beta}{4 h_x}\begin{pmatrix}
     -5 &15\\
     1 &-3
    \end{pmatrix},
C_8 = \frac{\beta}{4 h_x}\begin{pmatrix}
     -7 & -3\\
     11 &-9
    \end{pmatrix},\\ 
C_5 = C_6 = C_9 =\begin{pmatrix}
     0 & 0\\
     0 &0
    \end{pmatrix}.\end{aligned}
\end{equation}

\begin{equation} \label{eqn:dis_L_quad_details}
\begin{aligned}
     L(u,\phi) & =h_t h_x\sum_{\substack{0 \leq l \leq N_t-1\\  1\leq j \leq N_x}}   \langle  \phi^l_j,   \dfrac{u^{l+1}_j -u^l_j }{h_t} -\left( \begin{array}{c} (u^{l+1}_{j-1})^T C_1 u^{l+1}_{j-1} \\ (u^{l+1}_{j-1})^T C_2 u^{l+1}_{j-1} \end{array} \right)
- \left( \begin{array}{c} (u^{l+1}_{j})^T C_3 u_{j}^{l+1} \\ (u^{l+1}_{j})^T C_4 u^{l+1}_{j} \end{array} \right)
- \left( \begin{array}{c} (u^{l+1}_{j+1})^T C_5 u^{l+1}_{j+1} \\ (u^{l+1}_{j+1})^T C_6 u^{l+1}_{j+1} \end{array} \right)\\
& \;\hspace{3cm} - C_7 u^{l+1}_{j-1} - C_8 u^{l+1}_{j} - C_9 u^{l+1}_{j+1} \rangle. 
\end{aligned}
\end{equation}
Again by the first-order optimality condition, we arrive at the saddle point of the min-max problem \eqref{eq:min_max_quad_discretized} is as follows:
\begin{equation} \label{eqn:dis_minmax_quad_details}
\begin{aligned}
    \begin{cases}
       & \dfrac{u^{l+1}_j -u^l_j }{h_t} -\left( \begin{array}{c} (u^{l+1}_{j-1})^T C_1 u^{l+1}_{j-1} \\ (u^{l+1}_{j-1})^T C_2 u^{l+1}_{j-1} \end{array} \right)
- \left( \begin{array}{c} (u^{l+1}_{j})^T C_3 u^{l+1}_{j} \\ (u^{l+1}_{j})^T C_4 u^{l+1}_{j} \end{array} \right)
- \left( \begin{array}{c} (u^{l+1}_{j+1})^T C_5 u^{l+1}_{j+1} \\ (u^{l+1}_{j+1})^T C_6 u^{l+1}_{j+1} \end{array} \right)\\
& \; - C_7 u^{l+1}_{j-1} - C_8 u^{l+1}_{j} - C_9 u^{l+1}_{j+1} = \left(\begin{array}{c}
0 \\ 0
\end{array}\right),\quad 0 \leq l \leq N_t-1,\; 1\leq j \leq N_x,\\
  &     u^0_j = \left(\begin{array}{c}
u_0(x_{j-\frac{1}{4}}) \\u_0(x_{j+\frac{1}{4}}) 
\end{array}\right), \quad 1\leq j \leq N_x,\\
&        \dfrac{\phi^l_j - \phi^{l-1}_j}{h_t}  + \left(\left[C_1 u^l_{j+1},C_2 u^l_{j+1}\right] +  \left[C_3 u^l_j,C_4 u^l_j\right] + \left[C_5  u^l_{j-1},C_6 u^l_{j-1}\right]\right)\phi^{l-1}_j  + A_2^T \phi^{l-1}_{j+1}  \\
& + C_7 \phi^{l-1}_{j+1} + C_8 \phi^{l-1}_{j} + C_9 \phi^{l-1}_{j-1} = \left(\begin{array}{c}
0 \\ 0
\end{array}\right),\quad 1 \leq l \leq N_t,\; 1\leq j \leq N_x,\\
     & \phi^{N_t}_j =\left(\begin{array}{c}
0 \\ 0
\end{array}\right), \quad 1\leq j \leq N_x.
    \end{cases}
\end{aligned}
\end{equation}

\bibliographystyle{plain} 
\bibliography{ref1} 

\begin{thebibliography}{10}

\bibitem{benamou2000computational}
Jean-David Benamou and Yann Brenier.
\newblock A computational fluid mechanics solution to the monge-kantorovich
  mass transfer problem.
\newblock {\em Numerische Mathematik}, 84(3):375--393, 2000.

\bibitem{carrillo2022primal}
Jos{\'e}~A Carrillo, Katy Craig, Li~Wang, and Chaozhen Wei.
\newblock Primal dual methods for wasserstein gradient flows.
\newblock {\em Foundations of Computational Mathematics}, 22(2):389--443, 2022.

\bibitem{champock11}
Antonin Chambolle and Thomas Pock.
\newblock A first-order primal-dual algorithm for convex problems with
  applications to imaging.
\newblock {\em J. Math. Imaging Vision}, 40(1):120--145, 2011.

\bibitem{champock16}
Antonin Chambolle and Thomas Pock.
\newblock On the ergodic convergence rates of a first-order primal-dual
  algorithm.
\newblock {\em Math. Program.}, 159(1-2, Ser. A):253--287, 2016.

\bibitem{cheng2022new1}
Qing Cheng and Jie Shen.
\newblock A new lagrange multiplier approach for constructing structure
  preserving schemes, i. positivity preserving.
\newblock {\em Computer Methods in Applied Mechanics and Engineering},
  391:114585, 2022.

\bibitem{clason2017primal}
Christian Clason and Tuomo Valkonen.
\newblock Primal-dual extragradient methods for nonlinear nonsmooth
  pde-constrained optimization.
\newblock {\em SIAM Journal on Optimization}, 27(3):1314--1339, 2017.

\bibitem{cockburn2001runge}
Bernardo Cockburn and Chi-Wang Shu.
\newblock Runge--{K}utta discontinuous {G}alerkin methods for
  convection-dominated problems.
\newblock {\em Journal of Scientific Computing}, 16(3):173--261, 2001.

\bibitem{gander2007analysis}
Martin~J Gander and Stefan Vandewalle.
\newblock Analysis of the parareal time-parallel time-integration method.
\newblock {\em SIAM Journal on Scientific Computing}, 29(2):556--578, 2007.

\bibitem{harten1987uniformly}
Ami Harten, Bjorn Engquist, Stanley Osher, and Sukumar~R Chakravarthy.
\newblock Uniformly high order accurate essentially non-oscillatory schemes,
  iii.
\newblock {\em Journal of Computational Physics}, 71(1):231--303, 1987.

\bibitem{JacobsLegerLiOsher2018_solvinga}
Matt Jacobs, Flavien L\'eger, Wuchen Li, and Stanley Osher.
\newblock Solving {{Large}}-{{Scale Optimization Problems}} with a
  {{Convergence Rate Independent}} of {{Grid Size}}.
\newblock {\em arXiv:1805.09453 [math]}, 2018.

\bibitem{LI2022111409}
Wuchen Li, Wonjun Lee, and Stanley Osher.
\newblock Computational mean-field information dynamics associated with
  reaction-diffusion equations.
\newblock {\em Journal of Computational Physics}, page 111409, 2022.

\bibitem{li2021controlling}
Wuchen Li, Siting Liu, and Stanley Osher.
\newblock Controlling conservation laws i: entropy-entropy flux.
\newblock {\em arXiv preprint arXiv:2111.05473}, 2021.

\bibitem{li2022controlling}
Wuchen Li, Siting Liu, and Stanley Osher.
\newblock Controlling conservation laws ii: Compressible navier--stokes
  equations.
\newblock {\em Journal of Computational Physics}, 463:111264, 2022.

\bibitem{liu1994weighted}
Xu-Dong Liu, Stanley Osher, and Tony Chan.
\newblock Weighted essentially non-oscillatory schemes.
\newblock {\em Journal of Computational Physics}, 115(1):200--212, 1994.

\bibitem{valkonen2014primal}
Tuomo Valkonen.
\newblock A primal--dual hybrid gradient method for nonlinear operators with
  applications to mri.
\newblock {\em Inverse Problems}, 30(5):055012, 2014.

\bibitem{zang2020weak}
Yaohua Zang, Gang Bao, Xiaojing Ye, and Haomin Zhou.
\newblock Weak adversarial networks for high-dimensional partial differential
  equations.
\newblock {\em Journal of Computational Physics}, 411:109409, 2020.

\end{thebibliography}
\end{document}